\newtheorem{theorem}{Theorem}[section]
\newtheorem{corollary}[theorem]{Corollary}
\newtheorem{lemma}[theorem]{Lemma}
\theoremstyle{definition}
 \newtheorem{remark}{Remark}[section]
\newtheorem{definition}{Definition}[section]
\newtheorem{notation}{Notation}[section]
\newtheorem{example}{Example}[section]
\newtheorem{assumption}{Assumption}[section]
\numberwithin{equation}{section}
\def\IN{\mathbb{N}}
\def\cF{{\cal F}}
\def\cG{{\cal G}}
\def\pr{\prime}
\def\cY{{\cal Y}}
\def\wh{\widehat}
\def\q{\quad}
\def\noi{\noindent}
\def\pr{\prime}
\def\la{\langle}
\def\ra{\rangle}
\def\wt{\widetilde}
\def\wh{\widehat}
\def\ol{\overline}
\def\ul{\underline}
\def\os{\overset}
\def\us{\underset}
\def\vs{\vskip.3cm}
\def\vsk{\vskip.5cm}
\begin{document}

\title{SEMIMARTINGALE DYNAMICS AND\\ 
 ESTIMATION FOR A SEMI-MARKOV CHAIN}

\vsk
\author{Robert J. Elliott\\
\\
University of South Australia, Adelaide, Australia\\
and\\
University of Calgary, Calgary, AB, Canada}

\date{April 23, 2019}

\maketitle
\begin{abstract}

We consider a finite state discrete time process $X.$  Without loss
of generality the finite state space can be identified with the set
of unit vectors $\{e_1,e_2,\dots,e_N\}$ with $e_i =
(0,\dots,0,1,0,\dots,0)^\pr \in R^N.$  For a Markov chain the times
the process stays in any state are geometrically distributed.  This
condition is relaxed for a semi-Markov chain.  We first derive the
semimartingale dynamics for a semi-Markov chain.  We then consider
the situation where the chain is observed in noise.  We suggest how
to estimate the occupation times in the states and  derive
filters and smoothers for quantities associated with the chain.

\vsk
\noi
{\bf Acknowledgments.} The support of NSERC and the Australian
Research Council is appreciated.
\end{abstract}

\newpage
\section{Introduction}\label{Sec1} 
The topic of investigation is a finite
state, discrete time process
\newline $X=\{X_k,k=0,1,\dots\}.$  We suppose,
without loss of generality, that the state space of $X$ is the set
$S=\{e_1,e_2,\dots,e_N\},$ $e_i=(0,\dots,0,1,0,\dots,0)^\pr \in R^N,$
of unit vectors in $R^N.$  If $X$ is a Markov chain, the times the
chain spends in any state are geometrically distributed random
variables.  For a semi-Markov chain this condition is relaxed and
the occupation times can have more general distributions. 
Semi-Markov chains are related to renewal processes and have been
used in applications since their introduction over 60 years ago. 
Their more general occupation times often are a better fit for
empirical data.  

The two main contributions of this note are

\begin{enumerate}
\item the semimartingale dynamics for the semi-Markov chain
\item the consequent extension to semi-Markov chains of the
filtering, smoothing and estimation results for hidden semi-Markov
chains, once an integer estimate of the occupation time has been
obtained.
\end{enumerate}

Earlier references on semi-Markov processes include the books by
Koski \cite{K}, Barbu and Limnios \cite{BL}, and  van~der~Hoek and
Elliott
\cite{VE}. References on filtering include Yu \cite{Y},
Krishnamurthy, Moore and Chung \cite{KMC} and 
Elliott, Limnios and Swishchuk \cite{ELS}.

Unfortunately the references \cite{ELS} and \cite{VE} include a few typing errors.

\section{Dynamics}\label{Sec2}

All processes are defined on a probability space $(\Omega  ,\cF,P).$

Our process of interest is the discrete time, finite state,
time-homogeneous semi-Markov chain $X=\{X_k,\,0\le k\}.$  Without
loss of generality the state space of the process can be identified
with the set of unit vectors
$$
S=\{e_1,e_2,\dots,e_N\}
$$
$$
e_i = (0,\dots,0,1,0,\dots,0)^\pr \in R^N.
$$
A semi-Markov chain is similar to a Markov chain except that the
time it spends in any state is no longer geometrically distributed.

\vs\noi
\begin{notation}\label{Not2.1} With $X_0\in S,$
we suppose  either $X_0\,,$ or its distribution
$p_0=(p^1_0,p^2_0,\dots,p^N_0)^\pr\in R^N,$ is known.

Write $\tau  _1$ for the first jump time of $X,$ when $X_{\tau  _1}
\ne X_0$ and more generally $\tau  _n$ for the $n^{\text{th}}$ jump
time.

$\cF = \{\cF_k,\,k=0,1,2,\dots\},$ where $\cF_k = \sigma 
\{X_0,X_1,\dots,X_k\},$ is the filtration generated by $X.$
\end{notation}

\vs\noi
\setcounter{definition}{1}
\begin{definition}\label{Def2.2}  $X$ is a semi-Markov chain if
$$
\begin{aligned}
P(X_{\tau  _{n+1}} &= e_j,\,\tau  _{n+1} -\tau  _n = m\vert 
\cF_{\tau  _n})\\
&= P(X_{\tau  _{n+1}} = e_j,\, \tau  _{n+1}-\tau  _n = m\vert 
X_{\tau  _n}=e_i)\\
&= P(\tau  _{n+1}-\tau  _n = m\vert  X_{\tau  _{n+1}} =
e_j,\,X_{\tau  _n} =e_i)P(X_{\tau  _{n+1}} = e_j\vert  X_{\tau  _n} = e_i)\\
&= f_{ji}(m) p_{ji}, \; \text{say},
\end{aligned}
$$
where 
$$
P(\tau  _{n+1}-\tau  _n = m\vert  X_{\tau  _{n+1}} = e_j,\,
X_{\tau  _n} = e_i) = f_{ij}(m)
$$
and 
$$
P(X_{\tau  _{n+1}} = e_j\vert  X_{\tau  _n} = e_i) = p_{ji}.
$$
\end{definition}

\vs\noi
\setcounter{assumption}{2}
\begin{assumption}\label{Ass2.3}
We shall suppose $f_{ij} (m)$ does not depend
on $e_j$ and write
$$
\begin{aligned}
P(\tau  _{n+1}-\tau  _n = m\vert  X_{\tau  _{n+1}} = e_j,\, X_{\tau 
_n} = e_i) 
&= P(\tau  _{n+1} -\tau  _n = m\vert  X_{\tau  _n}- e_i)\\
&= \pi  _i(m), \q m=1,2,\dots\,.
\end{aligned}
$$
That is, for each $i,\, 1\le i\le N,$ $\{\pi_i(m),\,m=1,2,\dots\}$
is a probability distribution on the positive integers.
\end{assumption}

Also, recall we assumed $X$ is homogeneous in time so the above
probabilities are independent of $n.$

\vs\noi
\setcounter{notation}{3}
\begin{notation}\label{Not2.4}
Write
$$
\begin{aligned}
G_i(k):&= P\big(\tau  _{n+1}-\tau  _n\le k\vert  X_n = e_i\big)\\
&= \sum^k_{m=1}\,\pi_i(m)\\
\\
F_i(k) : &= P\big(\tau  _{n+1} -\tau  _n >k\vert  X_n = e_i\big)\\
&= 1- G_i(k).
\end{aligned}
$$
Then
$$
\begin{aligned}
P(\tau  _{n+1}&=\tau  _n +k+1\vert  X_{\tau  _n+k}
=X_{\tau  _n} = e_i) \\
&= P(\tau  _{n+1}=\tau  _n+k+1\vert \tau  _{n+1}>\tau 
_n+k,X_{\tau  _n}=e_i)\\
&= \frac{\pi_i(k+1)}{F_i(k)} = \Delta  ^i(k), \, \text{say}.
\end{aligned}
$$
\end{notation}

\vsk\noi
\setcounter{definition}{4}
\begin{definition}\label{Def2.5}
For $1\le i\le N$ define the process
$h^i_k(X_k)$ by the recursion
$$
\begin{aligned}
h^i_0(X_0) &=\la X_0,e_i\ra\\
h^i_k(X_k) &= \la X_k,e_i\ra + \la X_k,e_i\ra\, 
\la X_k,X_{k-1}\ra \,h^i_{k-1}(X_{k-1}).
\end{aligned}
$$
Then $h^i_k(X_k)$ measures the amount of time $X$ has spent in state
$e_i$ since it last jumped to $e_i$.

If 
$$
h_k(X_k) = \sum^N_{i=1}\, h^i_k(X_k)
$$
then $h_0(X_0) = 1$ and
$$ 
h_k(X_k) = 1+\la X_k,X_{k-1}\ra\, h_{k-1}(X_{k-1}).
$$
Then $h_k(X_k)$ measures the amount of time since the last jump.
\end{definition}

\setcounter{theorem}{5}
\begin{lemma}\label{Lem2.6}
Suppose $i\ne j,$ \, $1\le i, \, j\le N.$
Then
$$
P\big(X_{t+1} = e_j\vert  X_t=e_i,\, h_t(X_t)\big)= p_{ji}  \Delta 
^i\big(h^i_t(X_t)\big).
$$
\end{lemma}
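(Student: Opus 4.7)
The plan is to reduce the statement to the basic semi-Markov data already set up in Definition~\ref{Def2.2}, Assumption~\ref{Ass2.3}, and Notation~\ref{Not2.4}, by interpreting $h^i_t(X_t)$ correctly and then pinning down which event we are computing the probability of.

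First, I would verify (by induction on $t$, using the recursion in Definition~\ref{Def2.5}) the interpretation already asserted there: if $\tau_n$ denotes the most recent jump time with $\tau_n\le t$, and $X_{\tau_n}=e_i$, then $h^i_t(X_t)=t-\tau_n$, while $h^j_t(X_t)=0$ for $j\ne i$. In particular, conditioning on $\{X_t=e_i,\,h^i_t(X_t)=k\}$ is the same as conditioning on the event
$$
A_{i,k}=\{X_{\tau_n}=e_i,\ \tau_{n+1}-\tau_n>k\},
$$
i.e.\ the chain jumped into $e_i$ at time $t-k$ and has not yet left.

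Next, for $i\ne j$, the event $\{X_{t+1}=e_j\}$ intersected with $A_{i,k}$ forces the next jump to occur at time $t+1$ with destination $e_j$, that is, $\{X_{\tau_{n+1}}=e_j,\ \tau_{n+1}-\tau_n=k+1\}$. Hence
$$
P\bigl(X_{t+1}=e_j\mid X_t=e_i,\,h^i_t(X_t)=k\bigr)
= P\bigl(X_{\tau_{n+1}}=e_j,\,\tau_{n+1}-\tau_n=k+1\mid A_{i,k}\bigr).
$$
By Definition~\ref{Def2.2} together with Assumption~\ref{Ass2.3}, the inner joint law on $\cF_{\tau_n}$ factors as $p_{ji}\pi_i(m)$, which lets me split the right-hand side as
$$
p_{ji}\cdot P\bigl(\tau_{n+1}-\tau_n=k+1\mid X_{\tau_n}=e_i,\,\tau_{n+1}-\tau_n>k\bigr).
$$
The conditional probability on the right is exactly $\pi_i(k+1)/F_i(k)=\Delta^i(k)$ by Notation~\ref{Not2.4}, so setting $k=h^i_t(X_t)$ gives the claim.

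The only non-mechanical step is the identification of the conditioning event with $A_{i,k}$, i.e.\ making sure that conditioning on the pair $(X_t,h_t(X_t))$ (equivalently, on $(X_t,h^i_t(X_t))$ when $X_t=e_i$) is the same as conditioning on $(X_{\tau_n},\,\tau_{n+1}-\tau_n>k)$; once that is in place, Assumption~\ref{Ass2.3} (independence of the sojourn law from the destination $e_j$) and the definition of $\Delta^i$ finish the proof. I do not anticipate a genuine obstacle, only the bookkeeping around the interpretation of $h^i_t$.
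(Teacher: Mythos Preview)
Your approach is essentially the paper's: identify the conditioning $\{X_t=e_i,\,h_t(X_t)\}$ with the information that the last jump landed in $e_i$ at time $\tau_n$ and no further jump has occurred by time $t$, rewrite $\{X_{t+1}=e_j\}$ as $\{\tau_{n+1}=t+1,\,X_{\tau_{n+1}}=e_j\}$, and then read off $p_{ji}\Delta^i(\cdot)$ from the semi-Markov data. One bookkeeping point to watch when you carry out the induction: the recursion in Definition~\ref{Def2.5} gives $h^i_{\tau_n}(X_{\tau_n})=1$ at the jump time itself, so in fact $h^i_t(X_t)=t-\tau_n+1$ rather than $t-\tau_n$; the paper records this as $\tau_n=t-h_t(X_t)+1$ and $\tau_{n+1}-\tau_n=h_t(X_t)$.
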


\begin{proof}
We first make some observations from the given data.  As
$$
X_t = e_i, \, h_t(X_t) = h^i_t(X_t).
$$
Also
$$
\tau  _n = t-h_t(X_t) +1 \q \text{so} \q X_t=X_{\tau  _n} = e_i\,.
$$
Finally 
$$
\tau  _{n+1} = t+1.
$$
Consequently
$$
\begin{aligned}
P\big(X_{t+1} &=e_j \vert  X_t = e_i,\, h_t(X_t)\big)\\
&= P\big(X_{\tau  _{n+1}} = e_j\vert  X_{\tau  _n}= e_i\;\text{and}\;
\tau  _{n+1}= \tau  _n + h_t(X_t)\big)\\
&= p_{ji}\Delta^i  \big(h_t(X_t)\big).
\end{aligned}
$$
\end{proof}

\vs\noi
\setcounter{remark}{6}
\begin{remark}\label{Rem2.7}
We are assuming there is a jump from $e_i$ to a different $e_j\,, \,
i\ne j,$ at time $t+1.$

So, given there is a jump
$$
\sum^n_{j=1\atop j\ne i}\, p_{ji}=1.
$$
\end{remark}

\vs\noi
\setcounter{theorem}{7}
\begin{corollary}\label{Cor2.8}
Under the same hypotheses, if there is not a jump at $t+1$
$$
\begin{aligned}
P\big(X_{t+1} &= e_i\vert  X_t = e_i, h_t(X_t)\big)\\
&= 1-\Delta  ^i\big(h^i_t(X_t)\big)\\
&= 1 - \Delta  ^i h^i_t(X_t)\,\sum^n_{j=1\atop j\ne i} \, p_{ji}\,.
\end{aligned}
$$
\end{corollary}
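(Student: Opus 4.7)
The plan is to obtain the corollary by a complementation argument from Lemma~\ref{Lem2.6} and the normalization recorded in Remark~\ref{Rem2.7}. Conditioned on $X_t = e_i$ and $h_t(X_t)$, the event ``$X_{t+1}$ equals one of the unit vectors'' partitions into the mutually exclusive events $\{X_{t+1} = e_i\}$ (no jump at $t+1$) and $\{X_{t+1} = e_j\}$ for $j \ne i$ (a jump to state $e_j$ at $t+1$). So I would begin from
$$
P\big(X_{t+1} = e_i \,\big|\, X_t = e_i,\, h_t(X_t)\big)
= 1 - \sum_{j \ne i}\, P\big(X_{t+1} = e_j \,\big|\, X_t = e_i,\, h_t(X_t)\big).
$$

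Next I would insert Lemma~\ref{Lem2.6} into each term in the sum: for every $j \ne i$, $P\big(X_{t+1} = e_j \,\big|\, X_t = e_i,\, h_t(X_t)\big) = p_{ji}\,\Delta^i\big(h^i_t(X_t)\big)$. Since $\Delta^i\big(h^i_t(X_t)\big)$ does not depend on $j$, it factors out of the sum, giving
$$
P\big(X_{t+1} = e_i \,\big|\, X_t = e_i,\, h_t(X_t)\big)
= 1 - \Delta^i\big(h^i_t(X_t)\big) \sum_{\substack{j=1 \\ j \ne i}}^{N}\, p_{ji}.
$$
This already yields the second expression in the displayed equality of the corollary.

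Finally, I would apply Remark~\ref{Rem2.7}, which states that $\sum_{j \ne i} p_{ji} = 1$ because the $p_{ji}$ are the conditional jump probabilities given that a jump occurs. Substituting this collapses the factor and produces the first expression $1 - \Delta^i\big(h^i_t(X_t)\big)$, completing the identification of both forms. There is no real obstacle here: the entire content of the corollary is bookkeeping, combining the per-target jump probability from Lemma~\ref{Lem2.6} with the stochastic-matrix condition on the columns of $(p_{ji})$; the only mild point of care is to be sure that Lemma~\ref{Lem2.6} applies to every $j \ne i$ under the single conditioning event $\{X_t = e_i,\, h_t(X_t)\}$, so that the linearity of conditional probability legitimately yields the sum above.
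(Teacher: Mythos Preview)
Your argument is correct and is exactly the intended one: the paper states Corollary~\ref{Cor2.8} without proof, as an immediate consequence of Lemma~\ref{Lem2.6} together with the normalization $\sum_{j\ne i} p_{ji}=1$ from Remark~\ref{Rem2.7}, which is precisely the complementation you carry out.
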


\vs\noi
\setcounter{notation}{8}\label{Not2.8}
\begin{notation}
Write for $k=1,2,\dots,$ \, $A(k)$ for the matrix with entries
$$
\begin{aligned}
a_{ii}(k) &= 1-\Delta  ^i(k)\\
a_{ji}(k) &= p_{ji}\Delta  ^i(k).
\end{aligned}
$$
\end{notation} 

\vs\noi
\setcounter{example}{9}\label{Ex2.9}
\begin{example}
For $N=3$
$$
A(k) = \begin{pmatrix}
1-\Delta  ^1(k) &p_{12}\Delta  ^2(k) &p_{13}\Delta  ^3(k)\\
p_{21}\Delta  ^1(k) &1-\Delta  ^2(k) &p_{23}\Delta  ^3(k)\\
p_{31}\Delta  ^1(k) &p_{32}\Delta  ^2(k) &1-\Delta  ^3(k)
\end{pmatrix}\,.
$$
\end{example}

\vs\noi
\setcounter{notation}{10}\label{Not2.10}
\begin{notation}
Define the matrices:
$$
\Pi = (p_{ij} , \, 1\le i,\, j\le N)
$$
where $p_{ii} = -1$ and $p_{ji}$ is, as above, for $i\ne j.$
$$
D(k) =\; \text{diag}\big(\Delta  ^1(k),\Delta  ^2(k),\dots, \Delta 
^N(k)\big).
$$
Then 
$$
A(k) = I + \Pi D(k)
$$
where $I$ is the $N\times N$ identity matrix.
\end{notation}

For the case when $N=3$
$$
\Pi=\begin{pmatrix} -1 &p_{12} &p_{13}\\
p_{21} &-1 &p_{23}\\
p_{31} &p_{32} &-1\end{pmatrix} \q\q
D(k) = \begin{pmatrix} \Delta  ^1(k) &0 &0\\
0 &\Delta  ^2(k) &0\\
0 &0 &\Delta  ^3(k)\end{pmatrix}\,.
$$

A key result is the following representation of the semi-Markov
chain $X.$

\setcounter{theorem}{11}
\begin{theorem}\label{Thm2.11}
The semi-Markov chain $X$ has the following semi-martingale dynamics:
$$
X_{k+1} = A\big(h_k(X_k)\big) X_k + M_{k+1} \in R^N.
$$
Here $M_{k+1}$ is a martingale increment:
$$
E[M_{k+1}\vert  X_k,h_k(X_k)] = 0\in R^N.
$$
\end{theorem}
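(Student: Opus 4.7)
The plan is to define $M_{k+1}$ by the equation itself, $M_{k+1} := X_{k+1} - A(h_k(X_k))X_k$, and then verify that it has zero conditional mean. Since $X$ takes values in the unit vectors and the conditioning information $\sigma(X_k, h_k(X_k))$ specifies which state we are in and how long we have been there, everything reduces to computing $E[X_{k+1}\mid X_k, h_k(X_k)]$ explicitly and recognizing it as a column of the matrix $A(h_k(X_k))$.

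First I would note that on the event $\{X_k = e_i\}$ we have $h_k(X_k) = h^i_k(X_k)$, so Lemma~\ref{Lem2.6} and Corollary~\ref{Cor2.8} give the one-step conditional distribution of $X_{k+1}$ exactly:
\[
P(X_{k+1} = e_i \mid X_k = e_i, h_k) = 1-\Delta^i(h^i_k), \qquad P(X_{k+1} = e_j \mid X_k = e_i, h_k) = p_{ji}\Delta^i(h^i_k)
\]
for $j\ne i$. Taking expectations of the unit-vector-valued random variable $X_{k+1}$ against this distribution yields
\[
E[X_{k+1}\mid X_k = e_i, h_k(X_k)] = (1-\Delta^i(h^i_k))\,e_i + \sum_{j\ne i} p_{ji}\Delta^i(h^i_k)\,e_j,
\]
which is precisely the $i$-th column of $A(h_k(X_k))$ as defined in Notation~2.9. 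Since $X_k = e_i$, multiplying the matrix by $X_k$ extracts exactly this column, giving $E[X_{k+1}\mid X_k, h_k(X_k)] = A(h_k(X_k))X_k$.

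Summing over the disjoint events $\{X_k=e_i\}$, $1\le i\le N$, then yields the identity on all of $\Omega$, so $M_{k+1} := X_{k+1} - A(h_k(X_k))X_k$ satisfies $E[M_{k+1}\mid X_k, h_k(X_k)] = 0$ as required. A small consistency check worth including is that the columns of $A(h_k(X_k))$ indeed sum to $1$, which follows from Remark~\ref{Rem2.7}: $1-\Delta^i(k) + \sum_{j\ne i} p_{ji}\Delta^i(k) = 1 - \Delta^i(k) + \Delta^i(k) = 1$. This guarantees that the ``drift'' $A(h_k(X_k))X_k$ is itself a probability vector, consistent with $X_{k+1}$ being a random unit vector.

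There is no real obstacle here once Lemma~\ref{Lem2.6} and Corollary~\ref{Cor2.8} are in hand; the content of the theorem is essentially a repackaging of those one-step transition probabilities into matrix form. The only subtlety is ensuring that the conditioning is rich enough: the pair $(X_k, h_k(X_k))$ recovers the most recent jump time $\tau_n = k - h_k(X_k)+1$, so the semi-Markov transition law reduces to an ordinary time-inhomogeneous transition kernel driven by $h_k(X_k)$, and the usual martingale-difference decomposition of a finite-state chain applies.
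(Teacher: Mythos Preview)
Your proof is correct and follows essentially the same approach as the paper: both rely on the observation that on $\{X_k=e_i\}$ one has $h_k(X_k)=h^i_k(X_k)$, so that $A(h_k(X_k))X_k$ extracts exactly the $i$-th column of $A$, which by Lemma~\ref{Lem2.6} and Corollary~\ref{Cor2.8} is the conditional distribution of $X_{k+1}$. The paper's proof is much terser --- it records only the $h_k$ versus $h^i_k$ point and leaves the martingale-increment verification implicit --- whereas you spell out the computation of $E[X_{k+1}\mid X_k,h_k(X_k)]$ and the column-sum check explicitly, but the underlying argument is the same.
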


\begin{proof}
We need only remark that we can write $h_k(X_k)$ in
$A\big(h_k(X_k)\big)$ rather than the different occupation times
$h^1(X_k),\dots,h^N(X_k).$ 

This is because the components of $X_k$ are in effect indicator
functions.  If $X_k = e_i$ the product $A\big(h_k(X_k)\big)e_i$
selects the $i^{\text{th}}$ column of $A_k\big(h_k(X_k)\big)$ and
the $h(k)$ will be that for $h^i_k(X_k).$
\end{proof}

\section{Full Markov Dynamics}\label{Sec3}

We now consider an infinite state space for the semi-Markov chain
and give Markov dynamics.

\begin{remark} The complete state of our semi-Markov chain $X$ at a
time $k$ is given by the state of the chain $X_k$ and the number of
time steps $h_k$ the chain has been in that state.  Suppose
$$
X_k = e_i \q \text{and} \q h_k = r.
$$
Then at time $k+1$ either $X_{k+1} = e_i$ and $h_{k+1} = r+1$ or
$X_{k+1} = e_j, \q j\ne i,$ and $h_{k+1} =1.$

That is the state space of the `complete' process is $S\times \IN,$
where
\newline $\IN\in \{1,2,\dots\}$ is the natural numbers. Then with the
above notation
$$
(e_i, r) \to (e_i,r+1) \q \text{with probability} \q 1-\Delta  ^i(r)
$$
or
$$
(e_i,r) \to (e_j,1) \q \text{with probability} \q p_{ji}\Delta 
^i(r).
$$
\end{remark}

Suppose $N=3$ so $S= \{e_1,e_2,e_3\}$.  A state space for this
3-state semi-Markov chain can be identified with countably many
copies of $S,$ so, for example $(e_1,1)$ corresponds to the infinite
vector $(1,0,0,\vert  0,\dots)^\pr.$

The first three components of this infinite column vector
correspond to the states $(e_1,1), (e_2,1), (e_3,1).$  The second
three correspond to $(e_1,2),(e_2,2),(e_3,2)$ and so on.  If at time
$1$ the chain is in state
$$
(e_1,1) = (1,0,0\vert  0,0,0\vert  0,0,\dots)^\pr
$$
this can become either
$$
(e_1,2) = (0,0,0\vert  1,0,0\vert  0,0,\dots)^\pr
$$
with probability $\big(1-\Delta  ^1(1)\big),$ or
$$
(e_2,1) = (0,1,0\vert  0,0,0\vert  0,0,\dots)^\pr
$$
with probability $p_{21}\Delta  ^1(1),$ or
$$
(e_3,1) = (0,0,1\vert  0,0,0\vert  0,0,\dots)^\pr
$$
with probability $p_{31}\Delta  ^1(1).$ 

There is then an infinite matrix $C$ which describes these
transitions.

In this 3 state case write
$$
\Pi(i) = \begin{pmatrix}
0 &p_{12}\Delta  ^2(i) &p_{13}\Delta  ^3(i)\\
p_{21}\Delta  ^1(i) &0 &p_{23}\Delta  ^3(i)\\
p_{31}\Delta  ^1(i) &p_{32}\Delta  ^2(i) &0\end{pmatrix}
$$
and
$$
D(i) = \begin{pmatrix}
1-\Delta  ^1(i) &0 &0\\
0 &1-\Delta  ^2(i) &0\\
0 &0 &1-\Delta  ^3(i)\end{pmatrix}\,.
$$
With $0$ representing the $3\times 3$ zero matrix
$$
C= \begin{pmatrix}
\Pi(1) &\Pi(2) &\Pi(3) &\cdots\\
D(1) &0 &0 &\cdots\\
0 &D(2) &0 &\cdots\\
\cdot &\cdot &\cdot &\cdots\\
\cdot &\cdot &\cdot &\cdots
\end{pmatrix}\,.
$$

If we write the enlarged vectors as $\ol X_k$ then the
semi-martingale dynamics of the semi-Markov chain can be written as
$$
\ol X_{k+1} = C\ol X_k + \ol M_{k+1} \in \ol S
$$
where $\ol S$ is the set of all infinite unit vectors of the form
$\{e_{ij}\}$ where $e_{ij} = \delta  _{ij}\,,$ the Kronecker delta
function $1\le i,$ \, $j\le \infty.$

This gives
$$
E [ \ol X_{k+1}\vert  \ol X_k] = C\ol X_k
$$
and
$$
E[\ol X_{k+1}\vert  \ol X_0] = C^{k+1}\,\ol X_0\,.
$$
However, the approximate estimates defined in Section 5.3 below are
possibly of  more use in practise.

\section{Observation Dynamics}\label{Sec4}

The Viterbi and smoothing results of \cite{EM} are now adapted to
this situation.

We suppose the semi-Markov chain $X$ is not observed directly. 
Instead there is an observation sequence
$y=\{y_0,y_1,\dots,y_k,\dots\}$ where
$$
y_k = c(X_k) + d(X_k)w_k\,.
$$

$\{w_k,\, k=0,1,2,\}$ is a sequence of i.i.d. $N(0,1)$ random
variables.  $c(\cdot)$ and $d(\cdot)$ are known real valued
functions. Note that any real function $g(X_k)$ takes only the
finite number of values $g(e_1), g(e_2),\dots, g(e_N).$

Write $g_k = g(e_k)$ and $g= (g_1,g_2,\dots,g_N)^\pr \in R^N.$  Then
$$
g(X_k) = \la g,X_k\,\ra\,.
$$
Consequently there are vectors $c= (c_1,c_2,\dots,c_N),$ \,
$d=(d_1,d_2,\dots,d_N)$ such that
$$
c(X_k) = \la c,X_k\,\ra\q \text{and}\q d(X_k) = \la d,X_k\,\ra\,.
$$
We suppose $d_k>0$ for $k=1,\dots,N.$

\begin{remark}\label{Rem3.1}
We suppose the observation process $y$ is scalar valued.  The
extension to a vector $y$ is immediate.
\end{remark}

\newpage\noi
{\bf Reference Probability 4.2.}

We suppose  there is a second `reference' probability measure , $\ol
P,$ under which 

\begin{enumerate}
\item  the process $X$ is still a semi-Markov chain with dynamics
$$
X_{k+1}= A\big(h(k)\big)X_k+M_{k+1} \in R^N,
$$
\item  the process $y=\{y_0,y_1,\dots\}$ is a sequence of i.i.d.
$N(0,1)$ random variables. 
\end{enumerate}

From $\ol P$ we now construct the original probability $P$ under
which

\begin{enumerate}
\item  the process $X$ is a semi-Markov chain with dynamics
$$
X_{k+1} = A\big(h(k)\big)X_k + M_{k+1}
$$

\item  The process $w= (w_0,w_1,\dots)$ is a sequence of i.i.d.
$N(0,1)$ random variables where
$$
w_k = \frac{y_k -\la c,X_k\,\ra}{\la d,X_k\,\ra} \,.
$$
\end{enumerate}

\setcounter{definition}{2}
\begin{definition}\label{Def3.3}
For $k=0,1,2,\dots$ write
$$
\lambda  _k = \frac{\phi\big((y_k-\la c,X_k\,\ra)/\la
d,X_k\,\ra\,\big)}{\la d,X_k\,\ra\phi(y_k)}
$$
where $\phi(x)$ is the $N(0,1)$ density
$\frac{e^{-x^2}}{\sqrt{2\pi}}\,.$
$$
\Lambda  _{0,k} := \prod^k_{\ell=0}\,\lambda  _\ell\,.
$$
\end{definition}

Recall
$$
\cF = \sigma  \{X_0,X_1,\dots,X_k\}
$$
and write
$$
\begin{aligned}
\cY_k &= \sigma  \{y_0,y_1,\dots,y_k\}\\
\cG_k &= \sigma  \{X_0,\dots,X_k,y_0,\dots,y_k\}.
\end{aligned}
$$
We consider the related filtrations $\{\cF_k\},\,\{\cY_k\}$ and
$\{\cG_k\}.$

\setcounter{definition}{3}
\begin{definition}\label{Def3.4}
The original `real world' probability $P$ is defined in terms of
$\ol P$ by setting
$$
\frac{dP}{d\ol P}\,\big\vert  _{\cG_k} = \Lambda_{0  ,k}\,.
$$
\end{definition}

We can then prove

\setcounter{theorem}{4}
 \begin{lemma}\label{Lem3.5}
Under $P$ \, $X$ is a semi-Markov chain with $X_{k+1} =
A\big(h(X_k)\big)X_k + M_{k+1}$ and $\{w_k,\, k=0,1,\dots\}$ is a
sequence of {\rm i.i.d.}\,$N(0,1)$ random variables where
$$
w_k = (y_k -\la c,X_k\ra)/\la d,X_k\ra\,.
$$
\end{lemma}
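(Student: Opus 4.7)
The plan is to exploit the reference-measure machinery of Definitions~\ref{Def3.3} and \ref{Def3.4}: once we know $\Lambda_{0,k}$ is a $\ol P$-martingale with mean one, Bayes' formula reduces both assertions to computations of conditional expectations under $\ol P$, where $y$ and $X$ are independent by construction.

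The first step is to verify the consistency of $P$. Fix $k\ge 0$ and note that under $\ol P$ the variable $y_k$ is $N(0,1)$ and independent of $\cG_{k-1}\vee\sigma(X_k)$. Since $\la d,X_k\ra>0$, a direct change of variables in the $N(0,1)$ integral gives
$$
\ol E\bigl[\lambda_k\,\big\vert\,\cG_{k-1}\vee\sigma(X_k)\bigr]
=\int\frac{\phi\bigl((y-\la c,X_k\ra)/\la d,X_k\ra\bigr)}{\la d,X_k\ra\,\phi(y)}\,\phi(y)\,dy=1,
$$
so $\{\Lambda_{0,k}\}$ is a positive $\ol P$-martingale with $\ol E[\Lambda_{0,k}]=1$ and Definition~\ref{Def3.4} indeed defines a probability measure that is consistent as $k$ varies.

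Next I would verify that the semi-Markov dynamics of $X$ are preserved. By Bayes' rule, for any bounded $\cF_{k+1}$-measurable $\psi$,
$$
E_P[\psi\mid\cG_k]=\ol E\bigl[(\Lambda_{0,k+1}/\Lambda_{0,k})\,\psi\,\big\vert\,\cG_k\bigr]
=\ol E[\lambda_{k+1}\psi\mid\cG_k].
$$
Applying this to $\psi=\la X_{k+1},e_j\ra$ and conditioning first on $\cG_k\vee\sigma(X_{k+1})$, the computation above shows $\ol E[\lambda_{k+1}\mid\cG_k\vee\sigma(X_{k+1})]=1$, hence
$$
E_P[\la X_{k+1},e_j\ra\mid\cG_k]=\ol E[\la X_{k+1},e_j\ra\mid\cG_k].
$$
Since $X$ is a semi-Markov chain under $\ol P$ with dynamics $X_{k+1}=A(h_k(X_k))X_k+M_{k+1}$ (by assumption in the reference-probability construction), the same conditional law of $X_{k+1}$ given $(X_k,h_k(X_k))$ holds under $P$, giving the asserted representation with the same martingale increment.

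Finally, to identify the $w_k$'s as i.i.d.\ $N(0,1)$ under $P$, I would compute the joint law of $(X_0,\dots,X_k,w_0,\dots,w_k)$ directly. Under $\ol P$, conditional on $\cF_k$, $(y_0,\dots,y_k)$ has density $\prod_{\ell=0}^k\phi(y_\ell)$. Definition~\ref{Def3.3} gives $\lambda_\ell\,\phi(y_\ell)=\phi(w_\ell)/\la d,X_\ell\ra$, so
$$
\Lambda_{0,k}\,\prod_{\ell=0}^k\phi(y_\ell)\,dy_\ell=\prod_{\ell=0}^k\phi(w_\ell)\,dw_\ell
$$
after the change of variables $y_\ell\mapsto w_\ell=(y_\ell-\la c,X_\ell\ra)/\la d,X_\ell\ra$ (Jacobian $\la d,X_\ell\ra$). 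Thus under $P$ and conditionally on $\cF_k$, $(w_0,\dots,w_k)$ has joint density $\prod\phi(w_\ell)$, which is independent of $X$; this yields unconditional independence and the i.i.d.\ $N(0,1)$ property. The main subtlety—really the only nontrivial point—is the conditioning used when checking $\ol E[\lambda_{k+1}\mid\cG_k]=1$, since $\lambda_{k+1}$ depends on $X_{k+1}$; handling this by inserting $\sigma(X_{k+1})$ into the conditioning and then marginalising is the one place one must be careful.
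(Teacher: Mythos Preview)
Your argument is correct. The paper does not actually prove this lemma; its proof environment reads in full ``For a proof see \cite{EAM}.'' What you have written is exactly the standard reference-probability computation carried out in that book (the discrete-time Girsanov-type change of measure for observation models), so you have in effect supplied the details the paper chose to omit. Your identification of the one delicate point --- that $\lambda_{k+1}$ depends on $X_{k+1}$, so one must insert $\sigma(X_{k+1})$ into the conditioning before averaging --- is accurate and correctly handled.
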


That is, under $P$ \,$y_k =\la c,X_k\ra + \la d,X_k\ra\,w_k\,.$

\begin{proof} For a proof see \cite{EAM}.
\end{proof}

\section{Filter for $X$}\label{Sec5}

The basic problem now is to obtain a recursive estimate for
$$
E[X_k\vert  \cY_k\,].
$$
From Bayes' rule, see \cite{EAM}, this is
$$
\frac{\ol E[\Lambda_k X_k\vert\cY_k]}{\ol E[\Lambda  _k\vert  \cY_k]}
$$
where $\ol E$ denotes expectation with respect to $\ol P.$

Write $\ol q_k = \ol E[\Lambda  _kX_k\vert  \cY_k]\in R^N.$
$$
\begin{aligned}
\gamma  _j(y_{k+1}) &= \frac{\phi\big((y_{k+1}-\la c,e_j\,\ra)/\la
d,e_j\,\ra\big)}{\la d,e_j\,\ra\,\phi(y_{k+1})}\\
B(y_{k+1}) &= \begin{pmatrix}
\gamma  _1(y_{k+1}) &0\,\dots &0\\
0 &\gamma  _2(y_{k+1}) &0\\
\vdots &\ddots &\vdots\\
0 &\dots \,0 &\gamma  _k(y_{k+1})
\end{pmatrix} \,.
\end{aligned}
$$

\setcounter{notation}{0}
\begin{notation}\label{Not3.7}
Recall $h_k(X_k)$ is the number of time steps $X$ has stayed in state
$X_k$ since the last jump.
\end{notation}

We shall define below a $\cY_k$ measurable integer estimate $\wh
h_k$ of $h_k(X_k).$

This will be used in the recursion for $q.$

\setcounter{theorem}{1}
\begin{theorem}\label{Thm3.8}  With $\wh h_k$ a
$\cY_t\text{-measurable}$ integer estimate of $h_k(X_k)$ a recursive
estimate for $\ol q_k = \ol E[\Lambda  _kX_k\vert  \cY_k\,]$ is given
by
$$
q_{k+1} = B(y_{k+1})A(\wh h_k) q_k\,; \; q_0 = p_0\,.
$$
\end{theorem}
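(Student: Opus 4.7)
The strategy is to compute $\ol E[\Lambda_{k+1} X_{k+1} \mid \cY_{k+1}]$ by exploiting the multiplicative decomposition $\Lambda_{k+1} = \Lambda_k \lambda_{k+1}$ together with the semimartingale representation from Theorem~\ref{Thm2.11}, and then to substitute the $\cY_k$-measurable estimate $\wh h_k$ for the unobserved $h_k(X_k)$ at the one step where the filter would otherwise fail to close. The initialization $q_0=p_0$ is read as the unnormalized prior on $X_0$ before the first observation is absorbed, so the recursion's first step incorporates $y_1$ and $\wh h_0$ simultaneously.

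First I would record the coordinate identity $\lambda_{k+1} X_{k+1} = B(y_{k+1}) X_{k+1}$, exact because when $X_{k+1}=e_j$ the scalar $\lambda_{k+1}$ equals the $j$-th diagonal entry $\gamma_j(y_{k+1})$ of $B(y_{k+1})$. Since $B(y_{k+1})$ is $\cY_{k+1}$-measurable it factors out of the conditional expectation, leaving $\ol E[\Lambda_k X_{k+1} \mid \cY_{k+1}]$. Inserting the dynamics $X_{k+1} = A(h_k(X_k)) X_k + M_{k+1}$ from Theorem~\ref{Thm2.11}, the martingale-increment contribution is killed by iterating the conditioning through $\cG_k$: under $\ol P$ the observation $y_{k+1}$ is $N(0,1)$ independent of $\cG_k$, so
\[
\ol E[\Lambda_k M_{k+1} \mid \cY_{k+1}] = \ol E\bigl[\Lambda_k\, \ol E[M_{k+1}\mid \cG_k] \,\big\vert\, \cY_{k+1}\bigr] = 0,
\]
using that $\Lambda_k$ is $\cG_k$-measurable and the martingale property of $M$.

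The remaining drift term $\ol E[\Lambda_k A(h_k(X_k)) X_k \mid \cY_{k+1}]$ is where the genuine obstruction lies: $A(h_k(X_k))$ is $\cG_k$-measurable but not $\cY_k$-measurable, so cannot be factored out exactly. Replacing $h_k(X_k)$ by the $\cY_k$-measurable integer estimate $\wh h_k$ produces a $\cY_{k+1}$-measurable matrix $A(\wh h_k)$ that does come out, and independence of $y_{k+1}$ from $(\Lambda_k, X_k)$ under $\ol P$ then collapses the conditioning from $\cY_{k+1}$ to $\cY_k$, giving $\ol E[\Lambda_k X_k \mid \cY_{k+1}] = \ol q_k$. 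Chaining these steps yields $q_{k+1} = B(y_{k+1}) A(\wh h_k) q_k$.

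The main obstacle is precisely this substitution: it is not an identity of conditional expectations but a principled approximation, so the theorem should be read as defining $q_k$ by the stated recursion and asserting that it serves as a $\cY_k$-adapted estimator of the unnormalized filter $\ol q_k$, coinciding with it exactly when $\wh h_k = h_k(X_k)$ almost surely. Making this interpretation explicit, together with the measurability and independence facts used above, completes the argument; any quantitative error bound would require a separate analysis of $\ol P(\wh h_k \neq h_k(X_k) \mid \cY_k)$ and lies beyond the claim as stated.
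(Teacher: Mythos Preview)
Your proposal is correct and follows essentially the same route as the paper: decompose $\Lambda_{k+1}=\Lambda_k\lambda_{k+1}$, insert the semimartingale dynamics of Theorem~\ref{Thm2.11}, drop the martingale increment, and then replace $h_k(X_k)$ by the $\cY_k$-measurable estimate $\wh h_k$ to close the recursion. You are in fact more explicit than the paper on two points it leaves implicit --- the coordinate identity $\lambda_{k+1}X_{k+1}=B(y_{k+1})X_{k+1}$ and the iterated-conditioning argument killing the $M_{k+1}$ term --- and your final paragraph correctly articulates that the substitution is an approximation defining $q_k$ rather than an exact identity for $\ol q_k$.
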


\begin{proof}
\begin{equation}\label{Eq3.1}
\begin{aligned}
\ol q_{k+1} &= \ol E[\Lambda    _{k+1}X_{k+1}\vert  \cY_{k+1}\,]\\
&= \ol E\big[\Lambda    _k\lambda  _{k+1}\big(A\big(h_k(X_k)\big)X_k +
M_{k+1}\big)\big\vert  \cY_{k+1}\,\big]\\
&=\ol E\big[\Lambda  _k\lambda 
_{k+1}A\big(h_k\big(X_k)\big)X_k\big\vert  \cY_{k+1}\,\big].
\end{aligned}
\end{equation}

Suppose $\wh h_k$ is a $\cY_k\text{-measurable}$ estimate of
$h_k(X_k)$ and substitute this in \eqref{Eq3.1}.

Then an estimate of \eqref{Eq3.1} is
\begin{equation}\label{Eq3.2}
\begin{aligned}
q_{k+1} :&= \ol E[\Lambda  _k\lambda  _{k+1}A(\wh h_k)X_k\vert 
\cY_{k+1}\,]\\
&= B(y_{k+1}A(\wh h_k)q_k
\end{aligned}
\end{equation}
where $q_k $ is the analogous recursive estimate of $\ol E[\Lambda  _k
X_k\vert  \cY_k\,].$

With $1= (1,1,\dots, 1)^\pr \in R^N$
$$
\la \,\ol q_{k+1},1\ra = \ol E[\Lambda  _{k+1}\vert  \cY_{k+1}\,]
$$
so a normalized estimate of $X_k$ given by $\cY_k$ is
$$
\frac{q_k}{\la q_{k}, 1\ra}\, .
$$
\end{proof}

\vs
\noi
{\bf Recursion For $\wh h$ 5.3}

Suppose $\wh h_k$ is known and $\cY_k$ measurable.  The MAP estimate
for $X_k$ given $\cY_k$ is the state $e_{i^*}$ corresponding to the
maximum value of $q^i_k,$ $i=1,2,\dots,N,$ or one of those states
in case of equality.  The recursion \eqref{Eq3.2} gives an estimate
for $q_{k+1}$ which in turn suggests a MAP estimate $e_{j^*}$ for
$X_{k+1}\,.$  That is $e_{j^*}$ corresponds to the maximum value of
$q^i_{k+1}\,,$ $i=1,2,\dots,N.$
Then if $i^*= j^*$\; $\wh h_{k+1} = \wh h_k +1.$  Otherwise $\wh
h_{k+1} = 1.$

These estimates for $h_k(X_k)$ provide corresponding estimates
$A(\wh h_k)$ for the transition matrix $A\big(h_k(X_k)\big).$    In
turn this provides the required recursive estimates.  

We noted in the representation of Theorem~2.12 that $h_k(X_k)$ could be
used rather than $h^i_k(X_k).$  In the recursion of Theorem~5.2 the
unnormalized probabilities $(q^1_k,q^2_k,\dots,q^N_k)$ are used. 
Observing these could give $\cY_k$ integer valued estimates $(\wh
h\,^i_k,\dots,\wh h_N)$ of $\big(h^1_k(X_k),\dots,h^N_k(X_k)\big).$ 
These can be used in the terms $a_{ji}(\wh h\,^i_k)$ of $A.$

\section{Estimates} \label{Sec6}
As stated earlier, the key contributions of this paper are:

\begin{enumerate}
\item   the semi-Martingale dynamics of Theorem \ref{Thm2.11}
for the semi-Markov chain.

\item   the recursive estimation $\wh h_k$ of the integer
$h_k(X_k)$ which is then substituted in the dynamics.
\end{enumerate}

This provides a recursive estimate of the state of the chain given
the observations $y.$

However, steps 1) and 2) above then allow immediate recursive
estimates for the following quantities:
$$
N^{ji}_k = \sum^k_{\ell=1}\,\la X_{\ell-1}, e_i\ra\,\la X_\ell,e_j\ra
$$
the number of jumps from $e_i$ to $e_j$
$$
J^i_k = \sum^k_{\ell=1}\,\la X_{\ell-1},e_i\ra
$$
the amount of time spent in state $e_i$ sums of the form
$$
G^i_k =\sum^k_{\ell=1}\, f(y_\ell) \,\la X_{\ell-1},e_i\ra\,.
$$

For example, consider the unnormalized estimate
$$
\ol \sigma  (N^{ji}_k X_k) := \ol E[\Lambda  _k N^{ji}_k X_k\vert 
\cY_k].
$$

A recursion for an approximation $\sigma  $ is given by

\begin{lemma}\label{Lem4.1} 
$$
\sigma  (N^{ji}_{k+1}X_{k+1}) = B(y_{k+1})A(\wh h_k)\sigma 
(N^{ji}_kX_k) + a_{ji}(\wh h_k)\,\la q_k,e_i\ra\,\gamma 
_j(y_{k+1})e_j
$$
where $q_k$ is given in Theorem~{\rm 5.2},
\end{lemma}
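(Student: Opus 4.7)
The plan is to imitate the standard change-of-measure filter derivation from \cite{EAM}, applied to the process $N^{ji}_k X_k$ rather than $X_k$ itself, and then insert the estimator $\wh h_k$ at the end to obtain the approximate recursion.

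First I would split the count off the new summand by writing
$$
N^{ji}_{k+1} X_{k+1} = N^{ji}_k X_{k+1} + \la X_k,e_i\ra\,\la X_{k+1},e_j\ra\,X_{k+1}.
$$
Since $X_{k+1}$ is a unit vector, $\la X_{k+1},e_j\ra X_{k+1} = \la X_{k+1},e_j\ra e_j$. Multiplying by $\Lambda_{k+1} = \Lambda_k \lambda_{k+1}$ and conditioning on $\cY_{k+1}$ then gives
$$
\ol\sigma(N^{ji}_{k+1}X_{k+1}) = \ol E[\Lambda_k\lambda_{k+1} N^{ji}_k X_{k+1}\vert \cY_{k+1}]
+ \ol E[\Lambda_k\lambda_{k+1}\la X_k,e_i\ra\la X_{k+1},e_j\ra e_j\vert \cY_{k+1}].
$$

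For the first term I would substitute $X_{k+1} = A(h_k(X_k))X_k + M_{k+1}$ from Theorem~\ref{Thm2.11}. Under $\ol P$, $y_{k+1}$ is independent of $X$, so conditioning first on $\cG_k\vee\sigma(y_{k+1})$ kills the martingale piece ($\ol E[M_{k+1}\vert\cG_k,y_{k+1}]=0$). The key observation identifying the $B$ matrix is that $\lambda_{k+1}e_\ell = \gamma_\ell(y_{k+1})e_\ell = B(y_{k+1})e_\ell$, so $\lambda_{k+1}X_{k+1} = B(y_{k+1})X_{k+1}$ and $B(y_{k+1})$ can be pulled outside the expectation. Replacing the true $h_k(X_k)$ by the $\cY_k$-measurable estimate $\wh h_k$, this term becomes $B(y_{k+1})A(\wh h_k)\sigma(N^{ji}_kX_k)$.

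For the second term, since $\la X_k,e_i\ra\la X_{k+1},e_j\ra$ is the indicator of an $e_i\to e_j$ transition, one has
$$
\ol E\bigl[\lambda_{k+1}\la X_k,e_i\ra\la X_{k+1},e_j\ra\bigm\vert \cG_k,y_{k+1}\bigr]
= \la X_k,e_i\ra\,\gamma_j(y_{k+1})\,a_{ji}(h_k(X_k)),
$$
using the transition probability $P(X_{k+1}=e_j\vert X_k=e_i,h_k)=a_{ji}(h_k)$ from Notation~\ref{Not2.8} (and the fact that on the event $X_k=e_i$ we have $h_k(X_k)=h^i_k(X_k)$, by Definition~\ref{Def2.5}). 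Pulling the constants out, substituting $\wh h_k$ for $h_k(X_k)$, and recognizing $\ol E[\Lambda_k\la X_k,e_i\ra\vert\cY_k] = \la q_k,e_i\ra$ from Theorem~\ref{Thm3.8} yields $a_{ji}(\wh h_k)\la q_k,e_i\ra\gamma_j(y_{k+1})e_j$. Adding the two contributions gives the claimed recursion.

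The main obstacle is conceptual rather than computational: the recursion is \emph{exact} only if the true $h_k(X_k)$ is used, and replacing it by $\wh h_k$ turns $\ol\sigma$ into the approximation $\sigma$. I would be careful to state this as an approximation (matching the statement of the lemma) and to justify that all the other manipulations -- the martingale cancellation and the factorization through $B(y_{k+1})$ -- remain valid verbatim under $\ol P$ because $\wh h_k$ is $\cY_k$-measurable and hence behaves as a constant once we condition on $\cY_{k+1}$.
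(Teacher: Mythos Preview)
Your proposal is correct and follows essentially the same route as the paper: split $N^{ji}_{k+1}=N^{ji}_k+\la X_k,e_i\ra\la X_{k+1},e_j\ra$, apply the semimartingale representation of Theorem~\ref{Thm2.11} to the first piece, compute the transition probability $a_{ji}$ in the second, and then replace $h_k(X_k)$ by $\wh h_k$ to pass from $\ol\sigma$ to the approximate $\sigma$. The paper's proof is considerably more terse, but the decomposition, the use of Theorem~\ref{Thm2.11}, and the identification of the two summands are identical; you have simply made explicit the martingale cancellation and the $B(y_{k+1})$ factorization that the paper leaves to the reader.
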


\begin{proof}
We suppose $q_k\,,$ $\sigma  (N^{ji}_kX_k)$ and $\wh h_k$ have been
determined.  Then
$$
\begin{aligned}
\ol E[\Lambda  _{k+1}&N^{ji}_{k+1}X_{k+1}\vert  \cY_{k+1}]\\
&\simeq \ol E[\Lambda    _k\lambda  _{k+1}(N^{ji}_k + \la
X_{k+1},e_j\ra\,\la X_k,e_i\ra)X_{k+1}\vert  \cY_{k+1}].
\end{aligned}
$$

Using Theorem 2.12 this is
$$
\ol E[\lambda  _{k+1}\Lambda  _k N^{ji}_k A(\wh h_k)X_k\vert  \cY_k]
+\ol E[\lambda  _{k+1}\Lambda  _k\,\la A(\wh h_k)X_k,e_j\ra \,
\la X_k,e_i\ra)\cY_k]e_j
$$
and the result follows.
\end{proof}

A recursion for an approximation $\sigma  $ of 
$$
\ol \sigma  (G^i_k) = \ol E[G^i_kX_k\vert  \cY_k]
$$
is given by

\setcounter{theorem}{1}
\begin{lemma}\label{Lem4.2}
$$
\sigma  (G^i_{k+1}X_{k+1})
= B(y_{k+1})A(\wh h_k)\sigma  (G^i_kX_k) + f(y_{k+1})\la
q_k,e_i\ra\,B(y_{k+1})A(\wh h_k)e_i\,.
$$
\end{lemma}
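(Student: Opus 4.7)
The plan is to follow the template of Lemma~\ref{Lem4.1}, exploiting the decomposition $G^i_{k+1} = G^i_k + f(y_{k+1})\la X_k,e_i\ra$ which is directly analogous to $N^{ji}_{k+1} = N^{ji}_k + \la X_{k+1},e_j\ra\la X_k,e_i\ra$. The present case is slightly simpler because $f(y_{k+1})$ is $\cY_{k+1}$-measurable, so it can be pulled outside every conditional expectation, and the ``new'' factor $\la X_k,e_i\ra$ depends only on time $k$.

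I would start by writing
$$\ol E[\Lambda_{k+1}G^i_{k+1}X_{k+1}\vert \cY_{k+1}] = \ol E\big[\Lambda_k\lambda_{k+1}\big(G^i_k + f(y_{k+1})\la X_k,e_i\ra\big)X_{k+1}\big\vert \cY_{k+1}\big],$$
splitting into two summands, and then invoking the identity $\lambda_{k+1}X_{k+1}=B(y_{k+1})X_{k+1}$ (valid because on $\{X_{k+1}=e_j\}$ the scalar $\lambda_{k+1}$ equals the $j$th diagonal entry $\gamma_j(y_{k+1})$ of $B(y_{k+1})$) to extract the $\cY_{k+1}$-measurable matrix $B(y_{k+1})$ from each summand. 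Substituting the semimartingale representation $X_{k+1}=A(h_k(X_k))X_k+M_{k+1}$ of Theorem~\ref{Thm2.11}, the martingale contributions are discarded by the same argument used in the proof of Theorem~\ref{Thm3.8}: under $\ol P$ the $y$'s are independent of $X$, so tower-conditioning first on $\cG_k$ produces the vanishing factor $\ol E[M_{k+1}\vert\cG_k]=0$. Replacing the unobservable $h_k(X_k)$ by the $\cY_k$-measurable proxy $\wh h_k$ then introduces the approximation $\sigma$, and $A(\wh h_k)$ can be pulled outside since it is $\cY_k$-measurable.

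The first summand thus reduces to $B(y_{k+1})A(\wh h_k)\sigma(G^i_kX_k)$. For the second, the crucial simplification is that $X_k$ is a canonical basis vector, whence $\la X_k,e_i\ra X_k = \la X_k,e_i\ra e_i$; the remaining conditional expectation then collapses to $\la q_k,e_i\ra e_i$ (using $\ol E[\Lambda_k\la X_k,e_i\ra\vert\cY_k]=\la q_k,e_i\ra$ from Theorem~\ref{Thm3.8}), yielding $f(y_{k+1})\la q_k,e_i\ra B(y_{k+1})A(\wh h_k)e_i$. The only step that requires genuine care, and so the main obstacle, is the substitution $h_k(X_k)\to\wh h_k$ inside the conditional expectation; but this is precisely the approximation already accepted in Theorem~\ref{Thm3.8} and Lemma~\ref{Lem4.1}, so I would simply invoke it rather than attempt to quantify its error.
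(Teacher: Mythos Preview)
Your proposal is correct and is precisely the argument the paper has in mind: the paper states Lemma~\ref{Lem4.2} without proof, relying on the reader to replay the computation of Lemma~\ref{Lem4.1} with the decomposition $G^i_{k+1}=G^i_k+f(y_{k+1})\la X_k,e_i\ra$, which is exactly what you do. The extra observations you make explicit---that $\lambda_{k+1}X_{k+1}=B(y_{k+1})X_{k+1}$, that $\la X_k,e_i\ra X_k=\la X_k,e_i\ra e_i$, and that the $h_k(X_k)\to\wh h_k$ substitution is the same approximation already adopted in Theorem~\ref{Thm3.8}---are the right ingredients and match the paper's implicit reasoning.
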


In applications we need $f(y) = 1, y$ or $y^2.$

For example, with $f(y)=1,$ $G_k^i = J^i_k$ and we have an
approximate recursion
$$
\sigma  (J^i_{k+1}X_{k+1}) 
= B(y_{k+1})A(\wh h_k)\sigma  (J^i_kX_k) +\la q_k,e_i\ra\,
B(y_{k+1})A(\wh h_k)e_i\,.
$$

\setcounter{remark}{2}
\begin{remark}\label{Rem4.3}
Now $\la X_k, \ul 1\ra = 1$ for all $k$ so $\la \sigma  (N^{ji}_k
X_k),1\ra$ gives an unnormalized estimate of $\sigma  (N^{ji}_k).$
\end{remark}

In turn these provide estimates such as
$$
\wh a_{ji}(\wh h_k) =\frac{\sigma  (N^{ji}_k)}{\sigma 
(J^i_k)}\,,\q\text{for}\q i\ne j,
$$
and for the other parameters of the model as in \cite{EAM}.

\section{Smoothers}\label{Sec7}
Suppose $0\le k\le T$ and we know
$\{y_0,y_1,\dots,y_T\}.$  We wish to find $E[X_k\vert  \cY_{T}].$
Using Bayes' theorem again, see \cite{EAM},
$$
E[X_k\vert  \cY_T] = \frac{\ol E[\Lambda  _{0,T}X_k\vert  \cY_T]}{\ol
E[\Lambda  _{0,T}\vert  \cY_T]}\,.
$$

Write $\Lambda  _{k+1,T} =\os T{\us{\ell=k+1}\prod}\, \lambda  _\ell\,.$
Then $\Lambda  _{0,T} = \Lambda  _{0,k}\Lambda  _{k+1,T}$ and
$$
\ol E[\Lambda  _{0,T}X_k\vert  \cY_{T}] =\ol E[\Lambda  _{0,k}X_k\ol
E[\Lambda  _{k+1,T}\vert  \cY_{T^v}\cF_k]\cY_T].
$$
The process $\{X_k,h_k(X_k)\}$ is Markov as shown in Theorem \q.

Therefore
$$
\ol E[\Lambda  _{k+1,T}\vert  \cY_{T^v}\cF_k] = \ol E[\Lambda 
_{k+1,T}\vert  \cY_{T^n}\sigma  \{X_k,h_k(X_k)\}].
$$
Suppose we have a sequence of MAP $\cY_T\text{-measurable}$ integer
estimates $\wh h_k$ of the $h_k(X_k).$

Substituting these in the dynamics of Theorem \ref{Thm2.11} , we have
approximate dynamics for $X$ under which it becomes a Markov chain.

\setcounter{definition}{0}
\begin{definition}\label{Def4.4}
Write $v^i_{T,T}=1$ and
$$
v^i_{k,T}: = \ol E[\Lambda  _{k+1,T}\vert  \cY_{T^v} \{X_k =
e_i\}_v\{h_k(X_k)=\wh h_k\}].
$$

Set $v_{k,T} = (v^1_{k,T},v^2_{k,T},\dots, v^N_{k,T})^\pr \in R^N.$
\end{definition}

\setcounter{theorem}{1}
\begin{theorem}\label{Thm4.5}
The process $v$ satisfies the backward dynamics
\begin{equation}\label{Eq4.1}
\begin{aligned}
v_{k,T} &= A^\pr (\wh h_k)B(y_{k+1})v_{k+1,t}\\
v_{T,T} &= (1,1,\dots,1)^\pr \in R^N.
\end{aligned}
\end{equation}
\end{theorem}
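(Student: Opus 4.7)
The plan is to establish the recursion by conditioning on $X_{k+1}$ and invoking the Markov structure of the augmented chain $\{X_k,h_k(X_k)\}$ (as identified in Section~3) after the estimate $\wh h_k$ has been substituted. First I would split $\Lambda_{k+1,T}=\lambda_{k+1}\Lambda_{k+2,T}$ and decompose
$$v^i_{k,T}=\sum_{j=1}^{N}\ol E\big[\lambda_{k+1}\Lambda_{k+2,T}\,\mathbf 1_{\{X_{k+1}=e_j\}}\,\big|\,\cY_T,X_k=e_i,h_k(X_k)=\wh h_k\big].$$
On the event $\{X_{k+1}=e_j\}$, the Radon--Nikodym factor reduces to $\lambda_{k+1}=\gamma_j(y_{k+1})$, which is $\cY_T$-measurable and can be pulled out. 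Under $\ol P$ the observations $y$ are i.i.d.\ $N(0,1)$ and independent of $X$, so conditioning on $\cY_T$ does not affect the $X$-dynamics.

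Next I would apply the tower property by further conditioning on $X_{k+1}$ together with the updated occupation time $h_{k+1}(X_{k+1})$: on $\{X_k=e_i,X_{k+1}=e_j\}$, the augmented state at time $k+1$ is either $(e_i,\wh h_k+1)$ if $j=i$ or $(e_j,1)$ if $j\ne i$, matching precisely the update rule for $\wh h_{k+1}$ in Recursion~5.3. Markovianity of $\{X_k,h_k(X_k)\}$ then gives
$$\ol E\big[\Lambda_{k+2,T}\,\big|\,\cY_T,X_{k+1}=e_j,X_k=e_i,h_k=\wh h_k\big]=v^j_{k+1,T}.$$
Using Lemma~\ref{Lem2.6} and Corollary~\ref{Cor2.8} (with $\wh h_k$ in place of $h_k(X_k)$), the one-step transition probability $\ol P(X_{k+1}=e_j\mid X_k=e_i,h_k=\wh h_k)$ is precisely the entry $a_{ji}(\wh h_k)$ of the matrix $A(\wh h_k)$.

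Assembling the three factors yields
$$v^i_{k,T}=\sum_{j=1}^{N}a_{ji}(\wh h_k)\,\gamma_j(y_{k+1})\,v^j_{k+1,T},$$
which is exactly the $i$-th component of $A^{\prime}(\wh h_k)B(y_{k+1})v_{k+1,T}$, since $B(y_{k+1})$ is diagonal with entries $\gamma_j(y_{k+1})$ and the transpose of $A$ places $a_{ji}$ in row $i$, column $j$. The terminal value $v_{T,T}=(1,\dots,1)^\pr$ follows from the convention that an empty product equals one. The main obstacle is the Markov reduction step: strictly speaking it is only an approximation, because replacing the true occupation time $h_k(X_k)$ by the $\cY_T$-measurable integer estimate $\wh h_k$ turns the genuine semi-Markov dynamics into a Markov chain with deterministic (data-dependent) transitions $A(\wh h_k)$; justifying the substitution rigorously amounts to invoking the same approximation already adopted in Theorem~\ref{Thm3.8}, so the recursion holds in the same approximate sense.
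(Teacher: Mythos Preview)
Your argument is correct and follows essentially the same route as the paper: split $\Lambda_{k+1,T}=\lambda_{k+1}\Lambda_{k+2,T}$, decompose over the events $\{X_{k+1}=e_j\}$, extract $\lambda_{k+1}=\gamma_j(y_{k+1})$, apply the tower property together with the Markov property of the augmented chain to identify the inner expectation as $v^j_{k+1,T}$, and then evaluate the one-step transition probability as $a_{ji}(\wh h_k)$. Your discussion of the approximation involved in replacing $h_k(X_k)$ by $\wh h_k$ is, if anything, more explicit than the paper's own treatment.
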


\begin{proof}
$$
\begin{aligned}
\la v_{k,T},e_i\ra &= v^i_{k,T} = \ol E[\Lambda  _{k+2,T}\lambda 
_{k+1}\vert  \cY_{T^v}\{X_k=e_i\}_v \wh h_k]\\
&= \sum^N_{j=1}\,\ol E[\la X_{k+1},e_j\ra \Lambda  _{k+2,T}\vert 
\cY_{T^v}\{X_k=e_i\}_v\wh h_k]\gamma  _j(y_{k+1})\\
&= \sum^N_{j=1}\,\ol E\big[\la X_{k+1},e_j\ra 
\ol E[\Lambda 
_{k+2,T}\vert  \cY_{T^v}\{X_k=e_i\}_v\wh h_{k^v}\{X_{k+1}e_j\}v\wh
h_{k+1}]\\
&\q \times\vert  \cY_{T^v}\{X_k=e_j\}_v \wh h_k\big]\gamma  _j(y_{k+1})\\
&=\sum^N_{j=1}\ol E[\la X_{k+1},e_j\ra\,\la v_{k+1,T},e_j\ra \vert 
\cY_{T^v}\{X_k=e_i\}_v\wh h_k]\gamma  _j(y_{k+1})\\
&= \sum^N_{j=1}\,a_{ji}(\wh h_k)\la v_{k+1,T}, e_j \ra \gamma 
_j(y_{k+1}).
\end{aligned}
$$
This gives the result of \eqref{Eq4.1}.
\end{proof}

\setcounter{theorem}{3}
\begin{theorem}\label{Thm4.6}
An unnormalized smoothed estimate for $X_k$ given observations
$\{y_0,y_1,\dots,y_T\}$ is 
$$
\wt q  _{k,T} := \ol E[\Lambda  _{0,T}X_k\vert  \cY_T] = (\text{\rm diag}\,
\la q_k,e_i\ra)v_{k,T}.
$$
A normalized smoothed estimate is then
$$
\frac{\wt q_{k,T}}{\la \wt q_{k,T},1\ra}\,.
$$
\end{theorem}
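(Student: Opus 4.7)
The plan is to combine two ingredients already available in the paper: the Markov structure of the enlarged process $\{X_k,h_k(X_k)\}$ under the reference measure $\ol P$ and the $\ol P$-independence of the observation sequence $\{y_\ell\}$ from $X$. The starting point is the factorization $\Lambda_{0,T}=\Lambda_{0,k}\,\Lambda_{k+1,T}$, splitting the Radon--Nikodym density into a $\cG_k$-measurable part and a part depending only on $X_{k+1},\dots,X_T$ and $y_{k+1},\dots,y_T$.

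First I would expand the $i^{\text{th}}$ component of $\wt q_{k,T}$ by the tower property, inserting the $\sigma$-algebra $\cF_k\vee\cY_T$:
$$
\wt q^{\,i}_{k,T} = \ol E\big[\Lambda_{0,k}\la X_k,e_i\ra\,\ol E[\Lambda_{k+1,T}\vert \cF_k\vee\cY_T]\,\big\vert\,\cY_T\big].
$$
Since under $\ol P$ the sequence $\{y_\ell\}$ is i.i.d.\ $N(0,1)$ and independent of $X$, while the enlarged chain $(X_k,h_k(X_k))$ is Markov, the inner conditional expectation depends only on $X_k$, $h_k(X_k)$, and $y_{k+1},\dots,y_T$. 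Substituting the $\cY_T$-measurable approximation $\wh h_k$ for the unobserved $h_k(X_k)$ and comparing with Definition~\ref{Def4.4} identifies this inner expectation with $v^{\,i}_{k,T}$ on the event $\{X_k=e_i\}$.

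Next, because $v^{\,i}_{k,T}$ is $\cY_T$-measurable it pulls out of the outer conditional expectation. The $\ol P$-independence of $y_{k+1},\dots,y_T$ from $\Lambda_{0,k}\la X_k,e_i\ra$ then allows conditioning on $\cY_T$ to be relaxed to conditioning on $\cY_k$, giving
$$
\wt q^{\,i}_{k,T} = v^{\,i}_{k,T}\,\ol E[\Lambda_{0,k}\la X_k,e_i\ra\vert \cY_k] = v^{\,i}_{k,T}\,\la q_k,e_i\ra.
$$
Collecting components yields $\wt q_{k,T}=(\text{diag}\,\la q_k,e_i\ra)\,v_{k,T}$. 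Summing coordinates and using $\la X_k,1\ra=1$ for $1=(1,\dots,1)^\pr\in R^N$ gives $\la\wt q_{k,T},1\ra = \ol E[\Lambda_{0,T}\vert \cY_T]$, so Bayes' formula at the opening of Section~\ref{Sec7} delivers the normalized smoother.

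The main delicate point is the substitution of $\wh h_k$ for $h_k(X_k)$ in the Markov reduction: this is where the word ``approximate'' (as used for the filter $q_k$ in Theorem~\ref{Thm3.8}) is absorbed into the very definition of $v_{k,T}$. Reading $v^{\,i}_{k,T}$ as already carrying $\wh h_k$ rather than the true $h_k(X_k)$ ensures no fresh error is introduced at this step beyond the one committed in constructing $\wh h_k$ through the MAP recursion of Section~\ref{Sec5}; everything else in the argument is the standard reference-measure computation.
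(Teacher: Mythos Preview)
Your argument is correct and follows the same route as the paper's proof: factor $\Lambda_{0,T}=\Lambda_{0,k}\Lambda_{k+1,T}$, use the tower property and the Markov structure under $\ol P$ to identify the inner conditional expectation with $v^i_{k,T}$, then pull it out and reduce the remaining outer expectation to $\la q_k,e_i\ra$. Your write-up is in fact more explicit than the paper's (which compresses these steps into two displayed lines and carries a typo, writing $\Lambda_{0,T}$ where $\Lambda_{0,k}$ is meant), and your closing remark on the role of the substitution $\wh h_k$ is a welcome clarification.
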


\begin{proof}
$$
\begin{aligned}
\ol E[\Lambda  _{0,T}\,\la X_k,e_i\ra\vert  \cY_T]
&= \ol E\big[\Lambda  _{0,T}\la X_k,e_i\ra \ol E[\Lambda  _{k+1,T}\vert 
\cY_{T^v}\{X_k=e_i\}_v \wh h_k]\cY_{0,T}\big]\\
& = \la q_k,e_i\ra\,\la v_{k,T},e_i\ra.
\end{aligned}
$$
Therefore,
$$
\begin{aligned}
\ol E[\Lambda  _{0,T}X_k\vert  \cY_{0,T}]
&= \sum^N_{i=1}\,\la q_k,e_i\ra\,\la v_{k,T},e_i\ra e_i\\
&= (\text{\rm diag}\,\la q_k,e_i\ra)v_{k,T}\,.
\end{aligned}
$$
\end{proof}

\section{Filtering the Full Markov Dynamics}\label{Sec8}

We showed in Section 3 that the state space of our
discrete time, finite state semi-Markov chain $X$ could be
identified with $\ol S=S\times \IN$ where 
\newline $S=\{e_1,e_2,\dots,e_N\}$
and $\ol N=\{1,2,3,\dots\}.$

We observed that by identifying this state space $\ol S$ with
countably many copies of $S,$ considered as an infinite column
vector, $X$ becomes a Markov chain $\ol X$ with transition matrix
$$
C = \begin{pmatrix} \Pi(1) &\Pi(2) &\Pi(3) &\cdots\\
D(1) &0 &0 &\cdots\\
0 &D(2) &0 &\cdots\\
\cdot &\cdot &\cdot &\cdots\\
\cdot &\cdot &\cdot &\cdots\end{pmatrix}
$$
so
$$
\ol X_{k+1} = C\,\ol X_k +\ol M_{k+1}\in \ol S
$$
where
$$
E[\,\ol X_{k+1} \vert  \,\ol X_k] = C\,\ol X_k\,.
$$

Suppose there is a scalar observation process
$y=\{y_k\,,\,k=1,2,\dots\}$ such that
$$
y_k =\la \,\ol c,\ol X_k\ra + \la\,\ol d,X_k\,\ra\,w_k
$$
where $w=\{w_k\,,\, k=1,2,\dots\}$ is a sequence of i.i.d. $N(0,1)$
random variables.

Here $\ol c = (c_1,c_2\,,\dots)$ and $\ol d = (d_1\,,d_i\,,\dots).$
Results of \cite{EM} extend immediately to this countable state
Markov chain so that, for example, if a reference measure $\ol P$ is
introduced, as in Section \ref{Sec3}, then with
$$
\begin{aligned}
\ol q_k &= \ol E\,[\Lambda    _k \ol X_k\vert  Y_k] \in R^\infty  \\
\gamma  _j(y_{k+1}) &= \frac{\phi\big((y_{k+1}-\la
\,\ol c,e_j\,\ra)/\la\, \ol d,e_j\,\ra\big)}{\la \,\ol d,e_j\,\ra
\phi(y_{k+1})}\\
\text{and}\q \ol B(y_{k+1}) &= \,\text{diag}\,\gamma  _i(y_{k+1})\\
\ol q_{k+1} &=  \ol B(y_{k+1})C\,\ol q_k\,.
\end{aligned}
$$

Filtering and smoothing results for quantities such as $N^{ij},
\,J^i$ and $G^i$ are exactly as before, except vectors with
countably many states are involved.

\section{Conclusion}\label{Sec9}
A semi-Markov chain is a generalization of a Markov chain where the
time spent in any state no longer has a geometric distribution.  We
derive the semimartingale dynamics of a semi-Markov chain.  Using
integer estimates of the times spent in a state we extend the
filter, smoothing and estimation results obtained in earlier
work.  In addition, by considering a countably infinite state space
the semi-Markov chain is a Markov chain and all earlier results
extend to this situation

\end{document}